\date{\today}
\newcommand{\bbD}{{\mathbb{D}}}
\newcommand{\bbE}{{\mathbb{E}}}
\newcommand{\bbC}{{\mathbb{C}}}
\newcommand{\bbT}{{\mathbb{T}}}
\newcommand{\z}{\zeta}
\newcommand{\Res}{\text{\rm Res}\,}
\allowdisplaybreaks \numberwithin{equation}{section}
\newtheorem{theorem}{Theorem}[section]
\newtheorem{lemma}[theorem]{Lemma}
\theoremstyle{definition}
\newtheorem{definition}[theorem]{Definition}
\newtheorem{assumption}[theorem]{Assumption}
\def\be{\begin{equation}}
\def\ee{\end{equation}}
\def\bea{\begin{eqnarray}}
\def\eea{\end{eqnarray}}
\def\bean{\begin{eqnarray*}}
\def\eean{\end{eqnarray*}}
\def\restr#1{\,\vrule\,\lower1ex\hbox{$#1$}}
\def\a{\alpha}
\def\d{\delta}
\def\D{\Delta}
\def\g{\gamma}
\def\G{\Gamma}
\def\l{\lambda}
\def\L{\Lambda}
\def\z{\zeta}
\title
{Cauchy Integral Formula for Fuchsian Groups. II}
\author{Alexander Kheifets\thanks{The author is thankful to S. Denisov for useful comments
and to the reviewers for the suggestions that helped to improve the manuscript.}}
\begin{document}

\maketitle



\begin{abstract}
We prove Conjecture 4.1 from \cite{K} (Theorem \ref{15} below):
a generalization of the Hasumi's Direct Cauchy Theorem property
for the derivatives.
This proof substitutes all proofs in \cite{K}.
\end{abstract}

\section{Introduction}
For the reader's convenience we repeat here some notations and definitions from \cite{K}.
Further references can be found in\cite{K}.
Let $D$ be a bounded domain in $\bbC$. Let $\bbD$ be the unit disk and let
$\bbT$ be the unit circle. Let $\L:\bbD\to D$ be the uniformization map and let $\G$ be the
corresponding Fuchsian group on $\bbD$.
\begin{definition}
Let $\a$ be a character of a Fuchsian group $\G$ on $\bbD$. We say that a function $u$
defined on $\bbD$ or/and on $\bbT$ is $\a$-automorphic if
$$
u\circ\g=\a(\g) u
$$
for every $\g\in\G$.
\end{definition}
\begin{definition}
We say that function $u(\z)$ analytic on $\bbD$ is of bounded characteristic if it is a
ratio of two bounded analytic functions $u(\z)=\dfrac{u_1(\z)}{u_2(\z)}$. We say that $u(\z)$
is of Smirnov class if the denominator is an outer function. We say that $u(\z)$ is an outer
Smirnov class function in both $u_1(\z)$ and $u_2(\z)$ are outer functions.
\end{definition}
Let $g_\z$ be the (complex) Green function of $\G$ with respect to point $\z\in\bbD$.
That is, $g_\z$ is the Blaschke product with zeros at the orbit of $\z$ under $\G$.
Assume that $\G$ is of Widom type, that is, that
$g'_\z$ is of bounded characteristic.
Using a Frostman theorem,
one can write 
\footnote{
In this formula in \cite{K} (the first formula on Page 2) the factor of $1-|\z|^2$ was missed. Consequently, 
there should be $\dfrac{1-|\z|^2}{|t-\z|^2}$ instead of $\dfrac{1}{|t-\z|^2}$
in many places throughout \cite{K}. Namely, in formulas (1.2), (1.3), (2.2), (2.3), (2.6), (3.2), (3.3),
in some unnumbered formulas in the proofs of Theorems 1.6, 2.1, 3.1,
in Definition 3.6 and Conjecture 4.1.
}
for almost every $t\in\bbT$
\be\label{12}
\dfrac{g'_\z(t)}{g_\z(t)}=\sum\limits_{\g\in\G}\dfrac{1-|\z|^2}{|\g(t)-\z|^2}\dfrac{\g'(t)}{\g(t)}.
\ee
Also, by a Pommerenke theorem
\be\label{11}
g'_\z=\dfrac{\D_\z}{\psi_\z},
\ee
where $\D_\z$ is an inner function and $\psi_\z$ is a bounded outer function.
Moreover, $g_\z$ and $\D_\z$ are character-automorphic functions.
We denote their characters as $\mu_\z$ and $\d_\z$, respectively.
\begin{definition}
We say that analytic on $D$ function $h$ belongs to $H^1(D)$ if
$h\circ\Lambda$ is a Smirnov class function of $\bbD$ and
$$
\int\limits_{\partial D}|h(s)||ds|<\infty
$$
\end{definition}
\begin{definition}
We say that the Cauchy Integral Formula holds for the domain $D$ if
for every function $h\in H^1(D)$
\be\label{C241229-01}
\dfrac{1}{2\pi i}\oint\limits_{\partial D}\dfrac{h(s)}{s-\l}ds=h(\l),\quad \l\in D.
\ee
\end{definition}
Throughout this paper we will make the following
\begin{assumption}\label{C250322-01}
$\L'$ and
$
\dfrac{\L(t)-\L(\z)}{g_\z(t)}
$
are outer Smirnov class functions.
\end{assumption}

\section{Cauchy Integral Formula}
\begin{lemma}\label{03}
Let
\be\label{01}
\widetilde h(t)
=
h(\L(t))
\dfrac{g_\z(t)^{k+1}}{(\L(t)-\L(\z))^{k+1}}
\dfrac{\L'(t)}{g'_\z(t)}\ ,
\ee
where 
$\L$, $g_\z$ are analytic on a neighborhood of $\z$
$$
g_\z(\z)=0,\quad g'_\z(\z)\ne 0,\quad \L'(\z)\ne 0,
$$
and $h$ is analytic on a neighborhood of $\L(\z)$. Then
$$
h^{(k)}(\L(\z))
=
\left(
\left(
\dfrac{1}{g'_\z(t)}
\dfrac{d}{dt}
\right)^k
\widetilde h(t)
\right)_{| t=\z}.
$$
\end{lemma}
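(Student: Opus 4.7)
The natural approach is to recognize that, because $g_\z(\z)=0$ and $g'_\z(\z)\neq 0$, the function $g_\z$ is a local biholomorphism near $\z$, so $s:=g_\z(t)$ is a good local coordinate. Under this change of variable, the operator $\frac{1}{g'_\z(t)}\frac{d}{dt}$ is exactly $\frac{d}{ds}$, so the claim reduces to showing that $\widetilde h$, viewed as an analytic function of $s$ near $s=0$, satisfies $\widetilde h^{(k)}(0)=h^{(k)}(\L(\z))$.

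The first step is to verify that $\widetilde h$ is in fact analytic at $\z$ despite the apparent singularity: the factor $\dfrac{g_\z(t)^{k+1}}{(\L(t)-\L(\z))^{k+1}}$ has a removable singularity at $\z$ because both numerator and denominator vanish to first order there, with nonvanishing leading coefficients $g'_\z(\z)$ and $\L'(\z)$ respectively; the factor $\dfrac{\L'(t)}{g'_\z(t)}$ is likewise analytic and nonzero at $\z$. Hence $\widetilde h$ extends analytically to a neighborhood of $\z$, and therefore defines an analytic function of $s=g_\z(t)$ near $s=0$.

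The main step is then a contour integral manipulation. I would start from the Cauchy formula for derivatives of $h$ at $\L(\z)$,
\[
h^{(k)}(\L(\z))=\frac{k!}{2\pi i}\oint\frac{h(z)}{(z-\L(\z))^{k+1}}\,dz,
\]
perform the change $z=\L(t)$, $dz=\L'(t)\,dt$, then multiply numerator and denominator by $g_\z(t)^{k+1}$ and introduce the factor $g'_\z(t)$ to obtain
\[
h^{(k)}(\L(\z))=\frac{k!}{2\pi i}\oint\frac{\widetilde h(t)}{g_\z(t)^{k+1}}\,g'_\z(t)\,dt.
\]
Changing variables again by $s=g_\z(t)$, $ds=g'_\z(t)\,dt$, this becomes $\dfrac{k!}{2\pi i}\oint\dfrac{\widetilde h}{s^{k+1}}\,ds$, which by Cauchy's formula equals $\dfrac{d^k}{ds^k}\widetilde h\big|_{s=0}$. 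Translating $\dfrac{d}{ds}$ back into $\dfrac{1}{g'_\z(t)}\dfrac{d}{dt}$ gives exactly the asserted identity.

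The only real obstacle is the removable-singularity verification; once that is dispatched, the rest is two routine changes of variable in a single contour integral. An alternative would be an induction on $k$ using Faà di Bruno, but it is considerably more cumbersome, so I would stick with the contour-integral route.
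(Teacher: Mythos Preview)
Your proposal is correct and follows essentially the same route as the paper: start from the Cauchy formula for $h^{(k)}(\L(\z))$, change variables via $\L$ to rewrite the integrand as $\widetilde h(t)\,g'_\z(t)/g_\z(t)^{k+1}$, then change variables via $g_\z$ and apply Cauchy again to read off the $k$th derivative in the $s=g_\z(t)$ coordinate. The paper phrases the intermediate step in the language of residues and introduces the auxiliary function $\widetilde{\widetilde h}$ with $\widetilde h(t)=\widetilde{\widetilde h}(g_\z(t))$, but this is exactly your ``view $\widetilde h$ as a function of $s$''; the arguments are the same.
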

\begin{proof}
Since $\L'(\z)\ne 0$, there exist a neighborhood $V_1$ of $\z$ and a neighborhood $U_1$ of $\L(\z)$
such that $\L$ maps conformally (one-to-one) $V_1$ onto $U_1$. Let $C_1$ be a circle centered at $\z$
that lies in $V_1$. Then
$$
\dfrac{h^{(k)}(\L(\z))}{k!}
=
\dfrac{1}{2\pi i}
\oint\limits_{\L(C_1)}
\dfrac{h(s)}{(s-\L(\z))^{k+1}}
ds
$$
\be\label{07}
=
\dfrac{1}{2\pi i}
\oint\limits_{C_1}
h(\L(t))
\dfrac{\L'(t)}{(\L(t)-\L(\z))^{k+1}}
dt
=
\Res_\z
\left(
h(\L(t))
\dfrac{\L'(t)}{(\L(t)-\L(\z))^{k+1}}
\right).
\ee
We rewrite relation \eqref{01} as
\be\label{04}
h(\L(t))
\dfrac{\L'(t)}{(\L(t)-\L(\z))^{k+1}}
=
\widetilde h(t)
\dfrac{g'_\z(t)}{g_\z(t)^{k+1}}.
\ee
It follows from \eqref{04} that the residue in \eqref{07} is equal to
\be\label{08}
\Res_\z
\left(
h(\L(t))
\dfrac{\L'(t)}{(\L(t)-\L(\z))^{k+1}}
\right)
=
\Res_\z
\left(
\widetilde h(t)
\dfrac{g'_\z(t)}{g_\z(t)^{k+1}}
\right).
\ee
Since $g'_\z(\z)\ne 0$, there exist a neighborhood $V_2$ of $\z$ and a neighborhood $U_2$ of $0$ such that $g_\z$ maps conformally (one-to-one) $V_2$ onto $U_2$. Then we can view $\widetilde h(t)$ as
$$
\widetilde h(t)=\widetilde{\widetilde h}(g_\z(t)),\quad t\in V_2.
$$
Let $C_2$ be a circle centered at $\z$
that lies in $V_2$. Then the residue in the right-hand side of \eqref{08} equals
$$
\Res_\z
\left(
\widetilde h(t)
\dfrac{g'_\z(t)}{g_\z(t)^{k+1}}
\right)
=
\dfrac{1}{2\pi i}
\oint\limits_{C_2}
\widetilde{\widetilde h}(g(t))
\dfrac{g'_\z(t)}{g_\z(t)^{k+1}}
dt
$$
$$
=
\dfrac{1}{2\pi i}
\oint\limits_{g_\z(C_2)}
\dfrac{\widetilde{\widetilde h}(s)}{s^{k+1}}
ds
=
\dfrac{\widetilde{\widetilde h}^{(k)}(0)}{k!}
=
\dfrac{1}{k!}
\left(
\left(
\dfrac{1}{g'_\z(t)}
\dfrac{d}{dt}
\right)^k
\widetilde{\widetilde h}(g_\z(t))
\right)_{|t=\z}
$$
\be\label{10}
=
\dfrac{1}{k!}
\left(
\left(
\dfrac{1}{g'_\z(t)}
\dfrac{d}{dt}
\right)^k
\widetilde h(t)
\right)_{|t=\z}.
\ee
Comparing \eqref{07}, \eqref{08}, and \eqref{10} we get the assertion of the lemma.
\end{proof}
\begin{theorem}\label{15}
Let $D$ be a bounded domain in $\bbC$ for which the Cauchy Integral Formula \eqref{C241229-01} holds.
Assume that the uniformization map $\L(\z)$ meets Assumption \ref{C250322-01}. Let $\G$ be
the corresponding Fuchsian group on $\bbD$. Then for every
$\mu^k_\z\d_\z$ automorphic $H^1(\bbD)$ function $f$ we have
$$
\int\limits_{\bbT}
\dfrac{f(t)}{\D_\z(t)g_\z(t)^k}
\dfrac{1-|\z|^2}{|t-\z|^2}L(dt)
=
\dfrac{1}{k!}
\left(
\left(
\dfrac{1}{g'_\z(t)}
\dfrac{d}{dt}
\right)^k
\left(\dfrac{f(t)}{\D_\z(t)}\right)
\right)_{| t=\z}
,\quad \z\in\bbD,
$$
where $g_\z$ is the Green function, $\D_\z$ is the inner part of $g'_\z$ $($see formula \eqref{11}$)$,
$\mu_\z$ is the character of $g_\z$, $\d_\z$ is the character of $\D_\z$,
and $L(dt)$ is the normalized Lebesgue measure on $\bbT$.
\end{theorem}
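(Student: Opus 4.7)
The plan is to apply the $k$-th derivative version of the Cauchy Integral Formula \eqref{C241229-01} on $D$ to a carefully chosen $h\in H^1(D)$, and then to unfold the resulting $\partial D$-integral into the Poisson integral on $\bbT$ appearing on the LHS via the Frostman identity \eqref{12}. Lemma \ref{03} will then match $h^{(k)}(\L(\z))$ with the differential expression on the RHS.

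The choice of $h$ is dictated by Lemma \ref{03}: I want $\widetilde h(t)=f(t)/\D_\z(t)$, which (via $g'_\z=\D_\z/\psi_\z$) forces
\[
h(\L(t))=\frac{f(t)\,(\L(t)-\L(\z))^{k+1}}{g_\z(t)^{k+1}\,\psi_\z(t)\,\L'(t)}.
\]
A character tally---$f$ contributes $\mu_\z^k\d_\z$; the outer Smirnov factor $((\L-\L(\z))/g_\z)^{k+1}$ from Assumption \ref{C250322-01} contributes $\mu_\z^{-(k+1)}$; and $1/(\psi_\z\L')$ contributes $\mu_\z/\d_\z$ (computed from $\L'\circ\g\cdot\g'=\L'$ together with $g'_\z=\D_\z/\psi_\z$)---shows the characters cancel, so $h$ is single-valued on $D$. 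Each factor is Smirnov by Assumption \ref{C250322-01} and the bounded-outer property of $\psi_\z$, so $h\circ\L$ is in the Smirnov class of $\bbD$.

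To obtain $h\in H^1(D)$ I would use $|g_\z|=|\D_\z|=1$ on $\bbT$ to reduce the integrability $\int_{\partial D}|h||ds|<\infty$ to that of $|f(t)|\,|\L(t)-\L(\z)|^{k+1}|g'_\z(t)|$ over a fundamental domain $F$ of $\G$ on $\bbT$; since $|\L-\L(\z)|$ is bounded by the diameter of $D$, the main point is the integrability of $|f||g'_\z|$, which rests on the Widom (bounded-characteristic) property of $g'_\z$ combined with $f\in H^1(\bbD)$. Granting $h\in H^1(D)$, the derivative form of \eqref{C241229-01} yields
\[
\frac{h^{(k)}(\L(\z))}{k!}=\frac{1}{2\pi i}\oint_{\partial D}\frac{h(s)\,ds}{(s-\L(\z))^{k+1}}=\frac{1}{2\pi i}\int_F\frac{f(t)\,g'_\z(t)}{g_\z(t)^{k+1}\,\D_\z(t)}\,dt
\]
after parametrising $\partial D$ by $s=\L(t)$, $t\in F$, substituting the definition of $h$, cancelling $(\L-\L(\z))^{k+1}$ and $\L'$, and using $1/\psi_\z=g'_\z/\D_\z$. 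Next, apply \eqref{12} to replace $g'_\z/g_\z$ by $\sum_{\g\in\G}\frac{1-|\z|^2}{|\g(t)-\z|^2}\frac{\g'(t)}{\g(t)}$. The factor $f(t)/(g_\z(t)^k\D_\z(t))$ is $\G$-invariant on $\bbT$ by the same character tally; using the pointwise identity $\g'(t)\,dt/\g(t)=2\pi i|\g'(t)|L(dt)$ on $\bbT$, changing variable $u=\g(t)$ in the $\g$-th summand, and using that $\{\g(F)\}_{\g\in\G}$ tiles $\bbT$, the sum collapses to
\[
\int_\bbT\frac{f(u)}{g_\z(u)^k\,\D_\z(u)}\,\frac{1-|\z|^2}{|u-\z|^2}\,L(du),
\]
which is exactly the LHS of the theorem.

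Finally, by Lemma \ref{03} applied with $\widetilde h=f/\D_\z$ one has $h^{(k)}(\L(\z))=((g'_\z(t)^{-1}d/dt)^k(f(t)/\D_\z(t)))|_{t=\z}$; dividing by $k!$ matches the RHS and closes the argument. The main technical obstacle I expect is the integrability step needed for $h\in H^1(D)$---controlling $|f||g'_\z|$ on a fundamental domain via the Widom condition and Assumption \ref{C250322-01}---together with a Tonelli-type justification for the interchange of $\sum_\g$ and $\int_F$ in the unfolding, which reduces to a positive-integrand computation once absolute values are taken.
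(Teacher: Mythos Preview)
Your argument is essentially the paper's: differentiate \eqref{C241229-01}, pull back via $s=\L(t)$ to the fundamental set $\bbE$, insert $g_\z/g'_\z$ and unfold to $\bbT$ using \eqref{12} together with the automorphy \eqref{14}, then invoke Lemma~\ref{03} with $\widetilde h=f/\D_\z$; the only cosmetic difference is that the paper starts from $h\in H^1(D)$, defines $f$ by \eqref{06}, and afterwards asserts that every $\mu_\z^k\d_\z$-automorphic $f\in H^1(\bbD)$ arises this way, whereas you construct $h$ from $f$ directly. On the step you flag as the main obstacle, no separate Widom-type bound on $g'_\z$ is needed: the very same unfolding taken with absolute values gives
\[
\int_{\bbE}|f(t)|\,|g'_\z(t)|\,L(dt)=\int_{\bbT}|f(t)|\,\frac{1-|\z|^2}{|t-\z|^2}\,L(dt)<\infty
\]
(this is \eqref{C250323-03} read backwards), which simultaneously delivers $h\in H^1(D)$ and the Tonelli justification for interchanging $\sum_{\g}$ and $\int_{\bbE}$.
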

\begin{proof}
We start with
the Cauchy Integral Formula for $D$ differentiated $k$ times
$$
\dfrac{1}{2\pi i}\oint\limits_{\partial D}\dfrac{h(s)}{(s-\l)^{k+1}}ds=
\dfrac{1}{k!}h^{(k)}(\l), \quad h\in H^1(D), \quad \l\in D.
$$
We do
the uniformization substitution $s=\L(t)$
$$
\dfrac{1}{k!}h^{(k)}(\L(\z))=
\dfrac{1}{2\pi i}\int\limits_{\bbE}\dfrac{h(\L(t))\L'(t)}{(\L(t)-\L(\z))^{k+1}}dt,
$$
where $\bbE$ is the fundamental set of $\G$ on $\bbT$, $\z\in \bbD$.
We can rewrite the latter as
$$
\dfrac{1}{k!}h^{(k)}(\L(\z))
=
\dfrac{1}{2\pi i}\int\limits_{\bbE}\dfrac{h(\L(t))\L'(t)}{(\L(t)-\L(\z))^{k+1}}
\dfrac{g_\z(t)}{g'_\z(t)}
\dfrac{g'_\z(t)}{g_\z(t)}
dt
$$
by formula \eqref{12}
\be\label{13}
=
\dfrac{1}{2\pi i}
\sum\limits_{\g\in\G}
\int\limits_{\bbE}
\dfrac{h(\L(t))\L'(t)}{(\L(t)-\L(\z))^{k+1}}
\dfrac{g_\z(t)}{g'_\z(t)}
\dfrac{1-|\z|^2}{|\g(t)-\z|^2}\dfrac{\g'(t)}{\g(t)}
dt.
\ee
Observe that
\be\label{14}
\dfrac{\L' g_\z}{g'_\z}\circ\g
=
\dfrac{(\L\circ\g)'}{\g'}
\dfrac{\mu_\z(\g) g_\z}{(g_\z\circ\g)'}\g'
=
\L'
\dfrac{\mu_\z(\g) g_\z}{\mu_\z(\g) g'_\z}
=
\L'
\dfrac{g_\z}{g'_\z}.
\ee
Therefore, we may continue from \eqref{13} as follows
$$
=
\dfrac{1}{2\pi i}
\sum\limits_{\g\in\G}
\int\limits_{\bbE}
\dfrac{h(\L(\g(t)))\L'(\g(t))}{(\L(\g(t))-\L(\z))^{k+1}}
\dfrac{g_\z(\g(t))}{g'_\z(\g(t))}
\dfrac{1-|\z|^2}{|\g(t)-\z|^2}\dfrac{\g'(t)}{\g(t)}
dt
$$
$$
=
\dfrac{1}{2\pi i}
\sum\limits_{\g\in\G}
\int\limits_{\g(\bbE)}
\dfrac{h(\L(t))\L'(t)}{(\L(t)-\L(\z))^{k+1}}
\dfrac{g_\z(t)}{g'_\z(t)}
\dfrac{1-|\z|^2}{|t-\z|^2}\dfrac{dt}{t}
$$
$$
=
\dfrac{1}{2\pi i}
\int\limits_{\bbT}
\dfrac{h(\L(t))\L'(t)}{(\L(t)-\L(\z))^{k+1}}
\dfrac{g_\z(t)}{g'_\z(t)}
\dfrac{1-|\z|^2}{|t-\z|^2}\dfrac{dt}{t}
$$
\be\label{05}
=
\int\limits_{\bbT}
\dfrac{h(\L(t))\L'(t)}{(\L(t)-\L(\z))^{k+1}}
\dfrac{g_\z(t)}{g'_\z(t)}
\dfrac{1-|\z|^2}{|t-\z|^2}L(dt)
.
\ee
In view of \eqref{14},
$$
\dfrac{h(\L(t))\L'(t) }{(\L(t)-\L(\z))^{k+1}}
\dfrac{g_\z(t)}{g'_\z(t)}
$$
is an automorphic (with the trivial character) function.
To have a Smirnov class function we define
\be\label{06}
f(t)=\dfrac{h(\L(t))\L'(t) }{(\L(t)-\L(\z))^{k+1}}
\dfrac{g_\z(t)}{g'_\z(t)}
g_\z(t)^k \D_\z(t).
\ee
$f$ is indeed of Smirnov class if $\L$
meets Assumption \ref{C250322-01} and $h\in H^1(D)$.
Moreover, in this case $f\in L^1(\bbT)$, since
\be\label{C250323-03}
\int\limits_{\bbT}
|f(t)|
\dfrac{1-|\z|^2}{|t-\z|^2}L(dt)
=
\int\limits_{\bbT}
\left|
\dfrac{f(t)}{\D_\z(t)}
\right|
\dfrac{1-|\z|^2}{|t-\z|^2}L(dt)
=
\dfrac{1}{2\pi}\oint\limits_{\partial D}\dfrac{|h(s)|}{|s-\l|}|ds|
.
\ee
Therefore, $f\in H^1(\bbD)$.
$f$ is also $\mu^k_\z\d_\z$ automorphic. 
Conversely, for an arbitrary function $f\in H^1(\bbD)$ that is automorphic with the character
$\mu^k_\z\d_\z$ one can recover $H^1(D)$ function $h$ via formula \eqref{06}.

In terms of $f$ formula \eqref{05} reads as
$$
\int\limits_{\bbT}
\dfrac{f(t)}{\D_\z(t)g^k_\z(t)}
\dfrac{1-|\z|^2}{|t-\z|^2}L(dt)
=
\dfrac{1}{k!}
h^{(k)}(\L(\z))
=
\dfrac{1}{k!}
\left(
\left(
\dfrac{1}{g'_\z(t)}
\dfrac{d}{dt}
\right)^k
\left(\dfrac{f(t)}{\D_\z(t)}\right)
\right)_{| t=\z}.
$$
The latter equality is due to Lemma \ref{03} with
$
\widetilde h=\dfrac{f}{\D_\z}.
$
\end{proof}

\noindent
{\bf Funding:\ }
This work was not supported by any funding.

\bigskip

\bigskip

A. Kheifets, Department of Mathematics and Statistics, University of Massachusetts Lowell, One University Ave.,
Lowell, MA 01854,USA

\emph{E-mail address:} {Alexander\underline{ }Kheifets@uml.edu}

\end{document}